\newtheorem{thm}{Theorem}
\newtheorem{lem}{Lemma}
\newtheorem{cor}{Corollary}
\theoremstyle{definition}
\theoremstyle{remark}
\newtheorem{rem}{Remark}
\theoremstyle{remark}
\newtheorem{ex}{Example}
\def\cM{\mathcal M}
\def\cP{\mathcal P}
\def\cC{\mathcal C}
\def\cH{\mathcal H}
\def\cV{\mathcal V}
\def\cW{\mathcal W}
\def\bR{\mathbb R} 
\def\bS{\mathbb S}
\def\bN{\mathbb N}
\def\xx{\mathbf x}
\def\Ker{\operatorname{Ker}}
\def\mD{\tilde\Delta}
\def\pa{\partial}
\def\bul{\;\bullet}
\def\wed{\;\wedge}
\def\sl{\mathfrak{sl}}
\def\osp{\mathfrak{osp}}
\begin{document}

\title{The Fischer Decomposition for the $H$-action and Its Applications}

\author{Roman L\' avi\v cka\thanks{I acknowledge the financial support from the grant GA 201/08/0397.
This work is also a part of the research plan MSM 0021620839, which is financed by the Ministry of Education of the   Czech Republic.}}

\date{}

\maketitle

\begin{abstract}
Recently the Fischer decomposition  for the $H$-action  of the Pin group on Clifford algebra valued polynomials has been obtained. We apply this tool to get various decompositions of special monogenic and inframonogenic polynomials in terms of two sided monogenic ones. 

\medskip\noindent
{\bf Keywords:} Fischer decomposition, special monogenic polynomials, inframonogenic polynomials, two sided monogenic polynomials

\medskip\noindent
{\bf AMS classification:} Primary 30G35; Secondary 58A10
\end{abstract}


\section{Introduction}
The classical Fischer decomposition of Clifford algebra valued (or spinor valued) polynomials is one of the most important fact concerning solutions of the Dirac equation in the Euclidean space $\bR^m.$ It leads immediately
to the right form of the Taylor (and Laurent) series for monogenic functions and has further important applications in Clifford analysis (see, e.g., \cite{BDS2,DSS}).  

A lot of attention has been recently devoted to study of properties of various classes of special solutions of the Dirac equation. For example, special solutions might be just those with values restricted to a~given subspace $V$ of the Clifford algebra. The case when the space $V$ is, in addition, invariant under an action of the Pin group seems to be the most interesting. In particular, if we consider the space $V=\bR_{0,m}^s$ of $s$-vectors in the Clifford algebra $\bR_{0,m}$ and the so-called $H$-action on the space $\cC^\infty(\bR^m,V)$ of smooth functions in $\bR^m$ taking  values in the subspace $V,$ then the Dirac equation reduces to
the Hodge-de Rham system. 
Similarly, for other suitable invariant subspaces of values, we get the so-called generalized Moisil-Th\'eodoresco systems. 

These systems have been recently  carefully studied and the corresponding
Fischer decomposition and the analogues of the Howe dual pairs have been described in a series of
papers \cite{R1,R2,DLS,DLS3}. It is well-known that the monogenic Fischer decomposition  is a refinement of the classical Fischer decomposition for scalar valued functions. It was shown recently that the Fischer decomposition for monogenic functions under the $H$-action can be viewed even as a refinement of the monogenic Fischer decomposition for Clifford algebra valued functions (see \cite{BSES,DLS2}).

The main aim of the paper is to describe an application of the decompositions mentioned above to
a study of the Fischer decomposition for inframonogenic functions introduced recently
by H. Malonek, D. Pe\~na Pe\~na and F. Sommen in papers  \cite{MPS1,MPS2}. In Section 2, we give a short review
of the Fischer decomposition for the $H$-action. In Section 3, we summarize the Fischer decomposition for special monogenic functions, that is, those taking values in a~given $H$-invariant subspace $V$ of the Clifford algebra $\bR_{0,m}.$ In particular, we describe quite explicitly the Fischer decomposition for solutions of generalized Moisil-Th\'eodoresco systems, cf. \cite{DLS}. In Section 4, we study spaces of homogeneous inframonogenic polynomials and we obtain their Fischer decomposition (see Theorem \ref{tdecompi}). 

As is explained in detail in \cite{BDS}, these results can be easily translated into the language of differential forms.

\section{The Fischer Decomposition for the $H$-action }

In this section, we describe the Fischer decomposition for the $H$-action  obtained recently in \cite{DLS2} using results from \cite{hom}. 

Before doing so we recall the well-known decomposition of spinor valued polynomials.
Let us denote by $\bS$ a~basic spinor representation for the Pin group $Pin(m)$ of the Euclidean space $\bR^m$ and by
$\cP(\bS)$ the space of $\bS$-valued polynomials in the vector variable $\xx=(x_1,\ldots,x_m)$ of $\bR^m.$
On the space $\cP(\bS),$ we can consider the so-called $L$-action of $Pin(m)$ given by
\begin{equation}\label{Laction}
[L(r)(P)](\xx)=r\,P(r^{-1}\xx\; r),\ r\in Pin(m),\ P\in\cP(\bS)\text{\ \ and\ \ }\xx\in\bR^m.
\end{equation}
Let the vectors $e_1,\ldots,e_m$ form the standard basis of $\bR^m.$ Then it is easily seen that the multiplication by the vector variable $\xx=e_1x_1+\cdots+e_mx_m$
and the Dirac operator $$\pa =e_1\pa_{x_1}+\cdots+e_m\pa_{x_m}$$ (both applied from the left)
are examples of invariant linear operators on the space $\cP(\bS)$ with the $L$-action.
Actually, the invariant operators $\xx$ and $\pa$ are basic in the sense that they generate the Lie superalgebra $\osp(1|2)$ which gives the hidden symmetry of the space $\cP(\bS),$ see \cite{DLS3} for details.
Using this observation,  the Fischer decomposition for  spinor valued polynomials is given in \cite{BSES}
as a~special case of the general theory of the Howe duality (see \cite{how}).
Indeed, denote by $\cM_k(\bS)$ the space of $k$-homogeneous polynomials $P\in\cP(\bS)$ which are left monogenic,
that is,  those satisfying the Dirac equation $$\pa P=0.$$
Then in this case the Fischer decomposition reads as follows:
\begin{equation}\label{FischerPin}
\cP(\bS)=\bigoplus_{k=0}^{\infty}\bigoplus_{p=0}^{\infty}\xx^p\cM_k(\bS).
\end{equation}
In addition, we know that, under the $L$-action, all submodules $\cM_k(\bS)$ are irreducible and mutually inequivalent. Recall that the decomposition (\ref{FischerPin}) follows easily from the classical Fischer decomposition of the space $\cP$ of scalar valued polynomials in $\bR^m$ which is given by
\begin{equation}\label{harmFischer}
\cP=\bigoplus_{k=0}^{\infty}\bigoplus_{p=0}^{\infty}|\xx|^{2p}\Ker_k\Delta.
\end{equation}
Here $|\xx|^2=x_1^2+\cdots+x_m^2=-\xx^2,$ $\Delta=\pa^2_{x_1}+\cdots+\pa^2_{x_m}=-\pa^2$ is the Laplace operator
and $\Ker_k\Delta$ is the space of $k$-homogeneous polynomials $P\in\cP$ such that $\Delta P=0.$
Indeed, we get easily the decomposition (\ref{FischerPin}) from (\ref{harmFischer}) using the fact that
\begin{equation}\label{harmmonog}
(\Ker_k\Delta)\otimes\bS=\cM_k(\bS)\oplus\xx\cM_{k-1}(\bS).
\end{equation}

Now we are going to deal with the case of Clifford algebra valued polynomials.
Let $\bR_{0,m}$ be the real Clifford algebra over $\bR^m$ satisfying the relations
$$e_ie_j+e_je_i=-2\delta_{ij}.$$
The Clifford algebra $\bR_{0,m}$ can be viewed naturally as the graded associative algebra
$$\bR_{0,m}=\bigoplus_{s=0}^m\bR_{0,m}^s$$
where $\bR_{0,m}^s$ denotes the space of $s$-vectors in $\bR_{0,m}.$
As usual we identify $\bR_{0,m}^1$ with $\bR^m.$
For an 1-vector $u$ and an $s$-vector $v,$ the Clifford product $uv$ splits into the sum of an $(s-1)$-vector $u\bullet v$ and an $(s+1)$-vector $u\wedge v.$ Indeed, we have that
$$uv=u\bullet v+u\wedge v\text{\ \ \ with\ \ \ }u\bullet v=\frac 12(uv-(-1)^svu)\text{\ \ and\ \ }u\wedge v=\frac 12(uv+(-1)^svu).$$
By linearity, we extend the so-called inner product $u\bullet v$ and the outer product $u\wedge v$ for an 1-vector $u$ and an arbitrary Clifford number $v\in\bR_{0,m}.$

In what follows, we deal with the space $\cP^*$ of $\bR_{0,m}$-valued polynomials in $\bR^m.$
Each polynomial $P\in\cP^*$ is of the form $$P(\xx)=\sum_{\alpha} a_{\alpha}\xx^{\alpha},\ x\in\bR^m$$ where the sum is taken over a~finite subset of multiindeces $\alpha=(\alpha_1,\ldots,\alpha_m)$ of $\bN_0^m,$
all coefficients $a_{\alpha}$ belong to $\bR_{0,m}$ and $\xx^{\alpha}=x_1^{\alpha_1}\cdots x_m^{\alpha_m}.$
Denote by $\cP^*_k$ the space of $k$-homogeneous polynomials of $\cP^*$ and by $\cP^s_k$ the space of $s$-vector valued polynomials of $\cP^*_k.$
In general, for $\cV\subset\cP^*$ put
$\cV_k=\cV\cap\cP^*_k$ and $\cV^s_k=\cV\cap\cP^s_k.$

As we have mentioned above, on the space $\cP(\bS)$ with the $L$-action,
$\pa$ and $\xx$ are basic invariant operators.
On the space $\cP^*$ of Clifford algebra valued polynomials we can consider yet another action, namely,
the so-called $H$-action given by
\begin{equation}\label{Haction}
[H(r)P](\xx)=r\;P(r^{-1}\xx\; r)\;r^{-1},\ r\in Pin(m),\ P\in\cP^*\text{\ \ and\ \ }\xx\in\bR^m.
\end{equation}
Then a~natural question arises
which invariant operators are basic under the $H$-action.
It is not difficult to see that,
under the $H$-action, the operators $\pa$ and $\xx$ are still invariant.
On the other hand, we can split the left multiplication by 1-vector $\xx$ into the outer multiplication $\xx\wed$ and the inner multiplication $\xx\bul,$ that is, $$\xx=\xx\wedge+\;\xx\bullet .$$
Analogously,
the Dirac operator $\pa$ can be split also into two parts
$\pa=\pa^++\pa^-$ where
$$\pa^+P=\sum_{j=1}^m e_j\wedge(\pa_{x_j}P)\text{\ \ and\ \ }\pa^-P=\sum_{j=1}^m e_j\bullet(\pa_{x_j}P).$$

Actually,
the operators $\pa^+,$ $\pa^-,$ $\xx\wed$ and $\xx\bul$ are basic invariant operators for the $H$-action.
Indeed, these operators generate the Lie superalgebra $\sl(2|1)$ which gives the hidden symmetry of the space $\cP^*,$ see \cite{DLS3} for details. As suggested by the general theory of the Howe duality, the corresponding Fischer decomposition can be obtained even in this case (see \cite{DLS2}).
Recall that the space of spinor valued polynomials decomposes into the direct sum of multiples of spaces of homogeneous solutions of the Dirac equation by non-negative integer powers of $\xx,$ see (\ref{FischerPin}).
It turns out that, for the $H$-action, basic building blocks are spaces of 'homogeneous' solutions of the Hodge-de Rham system of equations
\begin{equation}\label{HdR}
\pa^+P=0,\ \pa^-P=0
\end{equation}
and their multiples by non-trivial words in the letters $\xx\wed$ and $\xx\bullet.$
Note that $(\pa^+)^2=0,$ $(\pa^-)^2=0,$ $\xx\wedge\xx\wed=0$ and $\xx\bullet\xx\bul=0.$ In particular, we have that the set $\Omega$ of all non-trivial words in the letters $\xx\wed$ and $\xx\bullet$
looks like
\begin{equation}\label{Omega}
\Omega=\{1,\ \xx\wed,\ \xx\bul,\ \xx\wedge\xx\bul,\ \xx\bullet\xx\wed,\ \xx\wedge\xx\bullet\xx\wed,\ \xx\bullet\xx\wedge\xx\bul, \ldots\}.
\end{equation}
Let $\cH$ be the space of $\bR_{0,m}$-valued polynomials $P$ in $\bR^m$ satisfying the Hodge-de Rham system (\ref{HdR}). Recall that
$\cH^s_k$ is then the subset of $s$-vector valued polynomials $P$ of $\cH$ which are homogeneous of degree $k.$
The Fischer decomposition for the $H$-action reads as follows (see \cite{DLS2}).

\begin{thm}\label{tFischerH}
The space $\cP^*$ of $\bR_{0,m}$-valued polynomials in $\bR^m$ decomposes as
\begin{equation}\label{FischerH}
\cP^*=\bigoplus_{s=0}^{m}\bigoplus_{k=0}^{\infty}\bigoplus_{w\in\Omega}w\cH^s_k.
\end{equation}
\end{thm}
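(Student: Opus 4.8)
The plan is to deduce the $H$-action Fischer decomposition \eqref{FischerH} from the monogenic (spinor) Fischer decomposition \eqref{FischerPin}, exactly the way \eqref{FischerPin} was obtained from \eqref{harmFischer} via \eqref{harmmonog}. The key algebraic input is the classical observation that, as a module under the Pin group, the Clifford algebra $\bR_{0,m}$ embeds into a suitable tensor power (or, more precisely, $\bR_{0,m}\otimes_{\bR}\bS^*$ is a multiple of $\bS$, so that $\cP^*$ is a sum of copies of $\cP(\bS)$). Concretely, I would first realize that left multiplication on $\bR_{0,m}$ carries the $L$-type action while right multiplication commutes with it; the $H$-action \eqref{Haction} is precisely the tensor of the $L$-action (from the left factor) with the contragredient (from the right factor). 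Thus $\cP^*$ with the $H$-action decomposes as $\cP(\bS)\otimes \bS^*$ summed with appropriate multiplicities, and \eqref{FischerH} will follow by pushing \eqref{FischerPin} through this isomorphism and identifying what $\xx^p\cM_k(\bS)\otimes\bS^*$ becomes on the Clifford side.

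The main computational step is the decomposition of a single Fischer block. On the spinor side the building blocks are $\xx^p\cM_k(\bS)$; tensoring with $\bS^*$ and decomposing under the $H$-action should produce precisely the blocks $w\cH^s_k$ with $w\in\Omega$. The operators $\xx\wed,\xx\bul,\pa^+,\pa^-$ generate $\sl(2|1)$, and by the cited Howe-duality picture the isotypic pieces are parametrized by the pair $(s,k)$ (an $s$-vector valued $k$-homogeneous solution of the Hodge--de Rham system \eqref{HdR}) together with a "weight" recording which word $w$ has been applied; the relations $(\pa^\pm)^2=0$, $\xx\wed\xx\wed=0$, $\xx\bul\xx\bul=0$ are exactly what forces the admissible words to be the alternating words listed in $\Omega$. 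So the heart of the proof is: (i) show $w\cH^s_k\cap w'\cH^{s'}_{k'}=\{0\}$ for distinct data, i.e. the sum is direct; (ii) show the sum exhausts $\cP^*$, for which a dimension count per homogeneity degree $k$ using \eqref{harmFischer}, \eqref{harmmonog} and the combinatorics of $\Omega$ suffices, or alternatively an explicit projection/recursion argument peeling off one letter of $w$ at a time using the $\sl(2|1)$ commutation relations.

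Directness in step (i) I would get from representation theory: by \cite{DLS2,DLS3} the spaces $\cH^s_k$ are (sums of) irreducible $Pin(m)$-modules under the $H$-action, and distinct $(s,k)$ give inequivalent modules; the words $w$ then shift the $\sl(2|1)$-weight, so two summands with different $(s,k,w)$ lie in different isotypic components (for fixed $(s,k)$, different $w$ land in different weight spaces of the same $\sl(2|1)$-module, hence are still independent). For exhaustion in step (ii), the cleanest route is the inductive one already used for \eqref{FischerPin}: on $\cP^*_k$ one shows that every polynomial not annihilated by both $\pa^+$ and $\pa^-$ can be written, modulo lower blocks, as $\xx\wed(\cdot)$ or $\xx\bul(\cdot)$ applied to a polynomial of strictly smaller degree, using that $\{\xx\wed,\pa^+\}$ and $\{\xx\bul,\pa^-\}$ are (up to scalars depending on degree and vector-grade) invertible on the relevant subspaces — this is the Fischer-inner-product positivity argument in disguise. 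Iterating this peeling and tracking which letter was removed reconstructs exactly a word $w\in\Omega$ acting on some $\cH^{s}_{k'}$.

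The step I expect to be the main obstacle is verifying that the only words that survive are the alternating ones in $\Omega$, i.e. that applying the same letter twice in a row kills the block, and more subtly that no nontrivial linear relations exist among blocks coming from different words of the same length acting on different $\cH^s_k$. The nilpotency relations $\xx\wed\xx\wed=\xx\bul\xx\bul=0$ handle the "same letter twice" issue immediately, but establishing that the remaining words give genuinely independent, nonzero images — i.e. that $w\colon\cH^s_k\to\cP^*$ is injective for every admissible $w$ and every $s,k$ in the admissible range — requires the detailed $\sl(2|1)$-module structure from \cite{hom,DLS2,DLS3}, together with a careful bookkeeping of how the vector-grade $s$ changes ($\xx\wed$ raises it, $\xx\bul$ lowers it) so that words reaching grades outside $\{0,\dots,m\}$ are automatically excluded. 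Once that module-theoretic input is in place, assembling \eqref{FischerH} is a matter of summing over $s$, $k$ and $w\in\Omega$ and matching dimensions degree by degree against \eqref{harmFischer}.
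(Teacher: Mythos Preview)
Your route is quite different from the paper's, and it also has a concrete error. The paper does \emph{not} pass through the spinor decomposition \eqref{FischerPin} or through $\cP(\bS)\otimes\bS^*$ at all. It starts from the \emph{scalar} harmonic Fischer decomposition applied componentwise,
\[
\cP^*=\bigoplus_{s=0}^{m}\bigoplus_{k=0}^{\infty}\bigoplus_{p=0}^{\infty}|\xx|^{2p}\Ker^s_k\Delta,
\]
and then feeds in Homma's result (Theorem~\ref{hom}), which already decomposes each $\Ker^s_k\Delta$ under the $H$-action as $\cH^s_k\oplus\xx\wedge\cH^{s-1}_{k-1}\oplus\xx\bullet\cH^{s+1}_{k-1}\oplus\cW^s_k$. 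Since $|\xx|^2=-(\xx\wedge\xx\bullet+\xx\bullet\xx\wedge)$, multiplying these four pieces by powers of $|\xx|^{2}$ immediately produces all alternating words in $\Omega$, and the theorem follows. Your detour through $\xx^p\cM_k(\bS)\otimes\bS^*$ is not incoherent in principle, but the step you call ``the main computational step'' --- matching that tensor product with a sum of blocks $w\cH^s_k$ --- is essentially the whole content of the theorem, and you have only asserted that it ``should'' come out right; nothing in your outline performs it. (Note also that in the paper the decomposition of $\cM_k$ you would need, Theorem~\ref{tmonog}, is \emph{derived from} Theorem~\ref{tFischerH}, so invoking it here risks circularity.)

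There is moreover a genuine mistake in your peeling argument: you claim that $\{\xx\wed,\pa^+\}$ and $\{\xx\bul,\pa^-\}$ are invertible up to scalars, but by Lemma~\ref{lrels} both anticommutators vanish identically. The nonzero Euler-type anticommutators are the \emph{mixed} ones, $\{\xx\bul,\pa^+\}=-A$ and $\{\xx\wed,\pa^-\}=-B$. A correct peeling step therefore pairs $\xx\wed$ with $\pa^-$ and $\xx\bul$ with $\pa^+$; with that fix the inductive scheme can be carried out, but it then amounts to reproving Homma's theorem, which is precisely the input the paper takes for granted.
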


\begin{rem}
(i) In addition, we have that $\cH^s_k=\{0\}$ just for $s\in\{0,m\}$ and $k\geq 1,$ 
$\cH^0_0=\bR$ and $\cH^m_0=\bR e_1e_2\cdots e_m.$
Moreover,
under the $H$-action, all non-trivial modules $\cH^s_k$ are irreducible and mutually inequivalent.

\medskip\noindent
(ii) It is easy to see that 
$w\cH^s_k=\{0\}$ if either $s=0$ and the word $w$ begins with the letter $\xx\bul$ or
$s=m$ and the word $w$ begins with the letter $\xx\wed.$ Otherwise, $$w\cH^s_k\simeq\cH^s_k.$$
\end{rem}

Now we sketch a~proof of Theorem \ref{tFischerH}, see \cite{DLS2} for details.
By classical Fischer decomposition (\ref{harmFischer}), it is easy to see that
\begin{equation}\label{hFischer}
\cP^*=\bigoplus_{s=0}^{m}\bigoplus_{k=0}^{\infty}\bigoplus_{p=0}^{\infty}|\xx|^{2p}\Ker^s_k\Delta.
\end{equation}
where $\Ker^s_k\Delta=\{P\in\cP_k^s: \Delta P=0\}.$
Realizing that $\Delta=-(\pa^+\pa^-+\;\pa^-\pa^+)$
and $|\xx|^2=-(\xx\wedge\xx\bullet+\;\xx\bullet\xx\wed),$
the decomposition (\ref{FischerH}) then follows easily from the next result obtained by Y. Homma in \cite{hom}.

\begin{thm}\label{hom} Let $0\leq s\leq m$ and $k\in\bN_0.$ Then,
under the $H$-action, $\Ker^s_k\Delta$ decomposes into inequivalent irreducible pieces as
$$
\Ker^s_k\Delta=\cH^s_k\oplus\xx\wedge \cH^{s-1}_{k-1}\oplus\xx\bullet \cH^{s+1}_{k-1}\oplus \cW^s_k
$$
where $$\cW^s_k=((k-2+m-s)\;\xx\wedge\xx\bullet -(k-2+s)\;\xx\bullet\xx\wed )\cH^s_{k-2}$$
for $1\leq s\leq m-1$ and $k\geq 2,$ and $\cW^s_k=\{0\}$ otherwise.
Here $\cH^{s'}_{k'}=\{0\}$ unless $0\leq s'\leq m$ and $k'\in\bN_0.$
\end{thm}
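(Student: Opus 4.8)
The plan is to prove Theorem~\ref{hom} by the ``Howe duality by hand'' scheme familiar from the classical harmonic and monogenic Fischer decompositions, now applied to the quadruple $\pa^+,\pa^-,\xx\wed,\xx\bul$: one combines the $\sl(2|1)$ (anti)commutation relations among these operators with orthogonality in the Fischer inner product. First I would record the relations. One has $\{\pa^+,\xx\wed\}=0$ and $\{\pa^-,\xx\bul\}=0$ (immediate from $e_i\wed e_j\wed=-e_j\wed e_i\wed$ and $e_i\bul e_j\bul=-e_j\bul e_i\bul$ after commuting the $\pa_{x_j}$'s past the $x_i$'s), whereas the mixed anticommutators are grading operators:
\[
\{\pa^+,\xx\bul\}=-(k+s)\quad\text{and}\quad\{\pa^-,\xx\wed\}=-(k+m-s)\qquad\text{on }\cP^s_k.
\]
Together with $\Delta=-\{\pa^+,\pa^-\}$ and $|\xx|^2=-\{\xx\wed,\xx\bul\}$, these are precisely the structure relations of the hidden $\sl(2|1)$-symmetry of $\cP^*$. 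I would also fix the positive definite, $Pin(m)$-invariant Fischer inner product $\lz\cdot,\cdot\pz$ on $\cP^*$ under which left multiplication by $\xx$ is, up to a fixed sign, adjoint to $\pa$; since this form is block-diagonal in the bidegree $(k,s)$, comparing bidegree shifts shows that $(\xx\wed)^*$ is a nonzero multiple of $\pa^-$ and $(\xx\bul)^*$ a nonzero multiple of $\pa^+$. Recall finally that $\cH^s_k=\Ker\pa^+\cap\Ker\pa^-\cap\cP^s_k$ by the definition of the Hodge--de Rham system.

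The first step is to check that each of the four summands lies in $\Ker^s_k\Delta$. For $\xx\wed\cH^{s-1}_{k-1}$ and $\xx\bul\cH^{s+1}_{k-1}$ this is short: writing $\Delta=-\pa^+\pa^--\pa^-\pa^+$ and moving one of $\pa^+,\pa^-$ through $\xx\wed$ (respectively $\xx\bul$) by the relations above, every surviving term is proportional to $\pa^+P$ or $\pa^-P$ and hence vanishes for $P\in\cH$. The term $\cW^s_k$ is the real crux. Since $\xx\wed\xx\wed=\xx\bul\xx\bul=0$, the only words in $\Omega$ raising polynomial degree by two without changing form degree are $\xx\wed\xx\bul$ and $\xx\bul\xx\wed$; for $P\in\cH^s_{k-2}$ one computes, using only the relations above, that $\pa^+(\xx\wed\xx\bul P)$, $\pa^+(\xx\bul\xx\wed P)$, $\pa^-(\xx\wed\xx\bul P)$, $\pa^-(\xx\bul\xx\wed P)$ are explicit scalar multiples of $\xx\wed P$ and $\xx\bul P$, and a short bookkeeping then shows that $\bigl((k-2+m-s)\,\xx\wed\xx\bul-(k-2+s)\,\xx\bul\xx\wed\bigr)P$ is exactly the combination (unique up to scalar) annihilated by $\{\pa^+,\pa^-\}=-\Delta$. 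Carrying out this computation and reading off the coefficients is the main obstacle; it is the asymmetry between $k+s$ and $k+m-s$ in the mixed anticommutators that forces the two distinct weights $k-2+s$ and $k-2+m-s$.

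The second step is directness, correctness of the multiplicities, and exhaustion. Using $(\xx\wed)^*\propto\pa^-$, $(\xx\bul)^*\propto\pa^+$, the relations $\{\pa^+,\xx\wed\}=\{\pa^-,\xx\bul\}=0$, and $\pa^+P=\pa^-P=0$ for $P\in\cH$, one checks that the four summands are pairwise $\lz\cdot,\cdot\pz$-orthogonal (for instance $\cH^s_k\subseteq\Ker\pa^-$ is orthogonal to the image of $\xx\wed$, and $\lz\xx\wed P,\xx\bul Q\pz$ is proportional to $\lz\pa^+P,\pa^-Q\pz=0$ for $P,Q\in\cH$), so the sum is direct; moreover the Fischer norms of $\xx\wed P$, of $\xx\bul P$, and of the $\cW^s_k$-generator applied to $P$ come out to be nonzero multiples of $\lz P,P\pz$ precisely because the scalars $k+s$, $k+m-s$ (and those produced in the $\cW^s_k$-computation) do not vanish on the relevant bidegrees. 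This simultaneously yields the $H$-equivariant isomorphisms $\xx\wed\cH^{s-1}_{k-1}\simeq\cH^{s-1}_{k-1}$, $\xx\bul\cH^{s+1}_{k-1}\simeq\cH^{s+1}_{k-1}$, $\cW^s_k\simeq\cH^s_{k-2}$, and pins down exactly the degenerate ranges (namely $s\in\{0,m\}$ or small $k$) in which a given piece must collapse, in particular $\cW^s_k$ outside $1\le s\le m-1$, $k\ge 2$. Equality with all of $\Ker^s_k\Delta$ then follows from a dimension count: $\dim\Ker^s_k\Delta=\dim\cP^s_k-\dim\cP^s_{k-2}$ by the harmonic Fischer decomposition~(\ref{hFischer}), while an induction on $k$ shows the right-hand side of the theorem has the same dimension (equivalently, one invokes that $\pa^+,\pa^-,\xx\wed,\xx\bul$ and $Pin(m)$ form a Howe dual pair on $\cP^*$ and reads off the joint isotypic component in bidegree $(k,s)$).

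Finally, irreducibility and mutual inequivalence of the four pieces reduce, through the isomorphisms just obtained, to the same assertion for the four spaces $\cH^{s'}_{k'}$ carrying the pairwise distinct labels $(s,k)$, $(s-1,k-1)$, $(s+1,k-1)$, $(s,k-2)$, which is settled by identifying each nonzero $\cH^{s'}_{k'}$ with the appropriate $\so(m)$-irreducible (Cartan) component of $\cP^{s'}_{k'}$. In summary, the scaffolding is routine $\sl(2|1)$/Fischer bookkeeping, and the single genuinely non-routine point is the $\Delta$-computation on $\xx\wed\xx\bul\cH^s_{k-2}$ and $\xx\bul\xx\wed\cH^s_{k-2}$ that pins down $\cW^s_k$, together with the boundary-case analysis in $s$ and $k$.
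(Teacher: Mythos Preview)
The paper does not actually prove Theorem~\ref{hom}: it is quoted as a result of Homma \cite{hom} and used as input for Theorem~\ref{tFischerH}. So there is no ``paper's own proof'' to match; what one can compare against is the paper's treatment of the parallel statement for $\tilde\Delta$, Theorem~\ref{tdecompi}.

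Your proposal is correct and, in fact, is essentially the same machinery the paper deploys for Theorem~\ref{tdecompi}. The (anti)commutation relations you record are exactly Lemma~\ref{lrels} (with $A=E+\pa^+\rfloor$ and $B=E+\pa^-\rceil$), and your key computation --- moving $\pa^\pm$ through $\xx\wed\xx\bul$ and $\xx\bul\xx\wed$ on $\cH^s_{k-2}$ to read off the unique harmonic combination --- is the $\Delta$-version of the $\tilde\Delta$-calculation done in part~(b) of the proof of Theorem~\ref{tdecompi}. Concretely, for $P\in\cH^s_{k-2}$ one gets $\Delta(\xx\wed\xx\bul\,P)=-2c_1P$ and $\Delta(\xx\bul\xx\wed\,P)=-2c_2P$ with $c_1=k-2+s$, $c_2=k-2+m-s$, which forces the weights in $\cW^s_k$ exactly as stated.

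The one methodological difference worth noting is in the exhaustion step. In Theorem~\ref{tdecompi}(i) the paper does \emph{not} count dimensions: it shows the candidate sum $N^s_k$ sits inside $\Ker^s_k\tilde\Delta$ and then proves equality by checking that the $\xx^p(\cdot)\xx^p$-translates of the $N^s_{k-2p}$ already fill up $\cP^s_k$, using the inframonogenic Fischer decomposition~(\ref{iFischer}) together with Theorem~\ref{tFischerH}. That route is unavailable for Theorem~\ref{hom} in the paper's logical order, since Theorem~\ref{tFischerH} is \emph{derived from} Theorem~\ref{hom}; your Fischer-orthogonality plus dimension count (with $\dim\Ker^s_k\Delta=\dim\cP^s_k-\dim\cP^s_{k-2}$ from~(\ref{hFischer})) is therefore the appropriate substitute and completes the argument independently of Homma's paper.
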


In Section \ref{sinfra}, we obtain analogous decompositions for inframonogenic polynomials, see Theorem \ref{tdecompi} below.


\section{Special Monogenic Polynomials}

In this section, we study special polynomial solutions of the Dirac equation. In what follows,
special solutions are just those taking values in a~given subspace $V$ of the Clifford algebra $\bR_{0,m}.$
Since we consider the $H$-action on the space $\cP^*$ it is natural to assume that the subspace $V$ is invariant under the both side action of the Pin group $Pin(m),$ that is, $rVr^{-1}\subset V$ for each $r\in Pin(m).$
Obviously, in this case, for some $S\subset\{0,\ldots,m\},$ we have that
$$V=\bR_{0,m}^S\text{\ \ with\ \ }\bR_{0,m}^S=\bigoplus_{s\in S}\bR_{0,m}^s$$
because the spaces $\bR^s_{0,m}$ of $s$-vectors are all irreducible and mutually inequivalent with respect to the given action.
Denote by $\cP^S_k$ the space of $\bR^S_{0,m}$-valued polynomials $P$ in $\bR^m$ which are homogeneous of degree $k.$
In general, for $\cV\subset\cP^*$ put
$\cV^S_k=\cV\cap\cP^S_k$ as usual.

Furthermore, let us denote by, respectively, $\cM$ and $\tilde\cM$ the spaces of left and right monogenic polynomials, that is,
$$\cM=\{P\in\cP^*: \pa P=0\}\text{\ \ \ and\ \ \ }\tilde\cM=\{P\in\cP^*: P\pa=0\}.$$
Recall that $\cH=\{P\in\cP^*: \pa^+ P=0,\ \pa^-P=0\}$ 
is the space of polynomial solutions of the Hodge-de Rham system.  
It is easy to see and well-known that the space $\cH$ is formed just by all two-sided monogenic polynomials, that is, $$\cH=\cM\cap\tilde\cM.$$
Indeed, for $P\in\cP^s_k,$ we have that $$P\pa=(-1)^s\tilde\pa P$$  where $\tilde\pa=\pa^+-\pa^-$ is the so-called modified Dirac operator.
Moreover, it is easy to see that
$$
\tilde\cM=\{P\in\cP^*: \tilde\pa P=0\}\text{\ \ \ and\ \ \ }
\cH_k=\bigoplus_{s=0}^m\cH_k^s.$$ See \cite{BDS} for details.

Given a~set $S\subset\{0,\ldots,m\},$ we are mainly interested in the space $\cM^S_k$ of $\bR^S_{0,m}$-valued left monogenic polynomials in $\bR^m$ which are homogeneous of degree $k.$ Let us give a~few known examples of such spaces.

\begin{ex}
For $S=\{s\},$ we have that $\cM^s_k=\tilde\cM^s_k=\cH^s_k.$
In other words, for $s$-vector valued functions, all three notions of monogeneity coincide.
As we shall see again below the spaces $\cH^s_k$ of homogeneous solutions of the Hodge-de Rham system (\ref{HdR}) are basic building blocks for the $H$-action.
\end{ex}

\begin{ex}
For $S=\{0,\ldots,m\},$ we have that $\cM^S_k=\cM_k.$ In \cite{DLS2}, as an easy application of Theorem \ref{tFischerH},
the following multiplicity free irreducible decomposition of the space $\cM_k$ has been obtained.
\end{ex}

\begin{thm}\label{tmonog}
Under the $H$-action, the space $\cM_k$ decomposes into inequivalent irreducible pieces as
$$\cM_k=\left(\bigoplus_{s=0}^m \cH^s_k\right)\oplus\left(\bigoplus_{s=1}^{m-1}((k-1+m-s)\xx\bullet-(k-1+s)\xx\wed)\cH^s_{k-1}\right)$$
\end{thm}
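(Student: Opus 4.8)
The plan is to derive the decomposition of $\cM_k$ directly from the Fischer decomposition for the $H$-action (Theorem \ref{tFischerH}) together with Homma's result (Theorem \ref{hom}), by filtering out from the full decomposition of $\cP^*_k$ exactly those summands $w\cH^s_{k'}$ that lie in the kernel of the Dirac operator $\pa$. First I would observe that, by homogeneity, $\cM_k=\Ker_k\pa$ and that every $k$-homogeneous piece $w\cH^s_{k'}$ appearing in \eqref{FischerH} has $|w|+k'=k$, where $|w|$ is the length of the word $w\in\Omega$. Since each $\cH^s_{k'}$ is irreducible and the distinct summands $w\cH^s_{k'}$ are mutually inequivalent (or zero), the operator $\pa$, being $H$-invariant, maps each $w\cH^s_{k'}$ into a sum of summands of the decomposition of $\cP^*_{k-1}$; by Schur's lemma $\pa$ restricted to $w\cH^s_{k'}$ is either zero or an isomorphism onto another irreducible summand. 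Hence $\cM_k$ is the direct sum of those $w\cH^s_{k'}$ on which $\pa$ vanishes identically.

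Next I would decompose $\pa=\pa^++\pa^-$ and use $\pa^+P=0$, $\pa^-P=0$ for $P\in\cH$, so that on a summand $w\cH^s_{k'}$ only the way $\pa^\pm$ interacts with the word $w$ matters. The relevant commutation relations are the $\sl(2|1)$ relations among $\pa^+,\pa^-,\xx\wed,\xx\bul$; concretely $\pa^-(\xx\wed\,P)$ and $\pa^+(\xx\bul\,P)$ produce terms involving $\xx\wed\,\pa^-P$, $\xx\bul\,\pa^+P$ (which vanish on $\cH$) plus a zeroth-order "number operator" term proportional to $P$ itself, while $\pa^+(\xx\wed\,P)=-\xx\wed\,\pa^+P$ and $\pa^-(\xx\bul\,P)=-\xx\bul\,\pa^-P$ vanish on $\cH$ because $\xx\wed\xx\wed=0=\xx\bul\xx\bul$ and $(\pa^+)^2=0=(\pa^-)^2$. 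Working this out for words of length $0$, $1$, and $2$ shows: length $0$ gives $\cH^s_k\subset\cM_k$ for all $s$; length $1$ gives that $\pa(\alpha\,\xx\bul+\beta\,\xx\wed)\cH^s_{k-1}=0$ forces a single linear relation between $\alpha$ and $\beta$ (the coefficients coming from the eigenvalues of the number operators on $\cH^s_{k-1}$, namely $k-1+s$ for the $\xx\bul$ term paired with $\pa^+$, and $k-1+m-s$ for the $\xx\wed$ term paired with $\pa^-$), yielding exactly the one-dimensional space of solutions spanned by $((k-1+m-s)\xx\bul-(k-1+s)\xx\wed)\cH^s_{k-1}$; and length $\geq 2$ words contribute nothing new, because $\pa$ on them is, up to the already-killed terms, a nonzero multiple of a length-$(\ge1)$ word applied to $\cH$, hence injective, while a dimension count via \eqref{hFischer} and Theorem \ref{hom} shows the two families already listed exhaust $\cM_k$.

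To make the exhaustion rigorous I would compare with \eqref{hFischer}: $\cM_k\subseteq\cP^*_k=\bigoplus_{s}\bigoplus_{2p\le k}|\xx|^{2p}\Ker^s_{k-2p}\Delta$, and since $\pa$ is injective on $\xx$-multiples in the classical setting, one checks $\cM_k$ meets each $\Ker^s_{k-2p}\Delta$ only through its $\cH$-part and its single $\xx\wed\cH^{s-1}$/$\xx\bul\cH^{s+1}$-combination that survives $\pa$; feeding in Theorem \ref{hom}'s explicit decomposition of $\Ker^s_k\Delta$ identifies these survivors with the two families in the statement and confirms there is no further contribution from $\cW^s_k$ or from higher powers of $|\xx|^2$. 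The main obstacle is the second step: correctly computing the action of $\pa^+$ and $\pa^-$ on $w\cH^s_{k'}$ for words of length one, i.e.\ pinning down the exact eigenvalues $k-1+s$ and $k-1+m-s$ of the relevant number operators, since the whole point of the theorem is the precise form of these coefficients; once those relations are in hand, the direct-sum structure and inequivalence are immediate from Theorem \ref{tFischerH} and Schur's lemma.
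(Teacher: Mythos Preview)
Your approach is the one the paper points to: it states Theorem~\ref{tmonog} without proof, citing \cite{DLS2} and calling it ``an easy application of Theorem~\ref{tFischerH}'', and in the subsequent Remark it rewrites the result using precisely the operators $A=E+\pa^+\rfloor$ and $B=E+\pa^-\rceil$ whose eigenvalues you identify as $k-1+s$ and $k-1+m-s$. So the strategy---take the $H$-Fischer decomposition, compute $\pa$ on each isotypic block via the anticommutation relations of Lemma~\ref{lrels}, and read off the kernel---is exactly right.

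One slip to fix: your sentence ``the distinct summands $w\cH^s_{k'}$ are mutually inequivalent (or zero)'' is false. By Remark~(ii) after Theorem~\ref{tFischerH}, every nonzero $w\cH^s_{k'}$ is isomorphic to $\cH^s_{k'}$ itself, so for fixed $(s,k')$ the various words $w$ of the same length give \emph{equivalent} copies. Consequently the Schur's-lemma conclusion that ``$\cM_k$ is the direct sum of those $w\cH^s_{k'}$ on which $\pa$ vanishes identically'' is too strong: the kernel of $\pa$ on an isotypic block can be (and here is) a genuine linear combination of the basis summands, not a subsum. You tacitly correct this in your next paragraph when you look for $\alpha,\beta$ with $\pa(\alpha\,\xx\bul+\beta\,\xx\wed)\cH^s_{k-1}=0$, which is the correct computation; just rephrase the setup so that Schur's lemma is applied to the map $\pa$ between isotypic components (giving a matrix in $\alpha,\beta$ whose kernel you compute), not to individual summands. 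With that adjustment the proposal is sound.
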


\begin{rem} We rewrite now the result of Theorem \ref{tmonog} as in \cite{lav}.
Let us define the Euler operator $E$ and fermionic Euler operators $\pa^+\rfloor$ and $\pa^-\rceil$ by
\begin{equation}\label{eulers}
E=\sum_{j=1}^mx_j\pa_{x_j},\ \ \ \pa^+\rfloor=-\sum_{j=1}^m e_j\wedge e_j\bullet\text{\ \ \ and\ \ \ }
\pa^-\rceil=-\sum_{j=1}^m e_j\bullet e_j\wedge.
\end{equation}
For $P\in\cP^s_k,$ it is easy to see that $$EP=kP,\ \ \ \pa^+\rfloor P=sP\text{\ \ and\ \ }\pa^-\rceil P=(m-s)P.$$
See \cite{BDS} for details.
Putting $A=E+\pa^+\rfloor$ and $B=E+\pa^-\rceil,$
Theorem \ref{tmonog} tells us that
\begin{equation}\label{lmonog}
\cM_k=\cH_k\oplus X\cH_{k-1}\text{\ \ \ with\ \ \ }X=x\wedge A-x\bullet B.
\end{equation}
Here $X\cH_{k-1}=\{XP:P\in\cH_{k-1}\}.$
Notice that $X\cH^0_{k-1}=\{0\}$ and $X\cH^m_{k-1}=\{0\}.$ Moreover, we can obtain easily
an analogous decomposition for right monogenic polynomials. Indeed, we have that
\begin{equation}\label{rmonog}
\tilde\cM_k=\cH_k\oplus \tilde X\cH_{k-1}\text{\ \ \ with\ \ \ }\tilde X=x\wedge A+x\bullet B.
\end{equation}
\end{rem}

\begin{ex} Assume that  $r,p$ and $q$ are non-negative integers such that $p<q$\\ and $r+2q\leq m.$
Putting $S=\{r+2p,r+2p+2,\ldots,r+2q\},$ we call $$\pa f=0\text{ for }\bR_{0,m}^S\text{-valued functions }f$$ the generalized Moisil-Th\'eodoresco  system of type $(r,p,q).$
In \cite{DLS}, the space $\cM^S_k$ of $k$-homogeneous solutions of this system is decomposed into a~direct sum of pieces isomorphic to spaces $\cH^s_k$ of homogeneous solutions of the Hodge-de Rham system.
In Corollary \ref{tsmonog} below, we describe more explicitly pieces of the decomposition of the space $\cM^S_k$ even in the general case.
\end{ex}

Since $\cM^S_k$ is always an invariant subspace of the space $\cM_k$ the next result follows easily from Theorem \ref{tmonog}.
\begin{cor}\label{tsmonog} Let $S\subset\{0,\ldots,m\}$ and $S'=\{s:s\pm 1\in S\}.$
Under the $H$-action, the space $\cM^S_k$ decomposes into inequivalent irreducible pieces as
$$\cM_k^S=\left(\bigoplus_{s\in S} \cH^s_k\right)\oplus\left(\bigoplus_{s\in S'}((k-1+m-s)\xx\bullet-(k-1+s)\xx\wed)\cH^s_{k-1}\right).$$
In particular, we have that
$\cM^S_k=\cH^S_k\oplus X\cH^{S'}_{k-1}$
where $X$ is as in (\ref{lmonog}).
\end{cor}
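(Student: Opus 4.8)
The plan is to derive Corollary~\ref{tsmonog} directly from Theorem~\ref{tmonog} by intersecting the decomposition of $\cM_k$ with the subspace $\cP^S_k$ of $\bR^S_{0,m}$-valued $k$-homogeneous polynomials. The first observation is that $\cM^S_k=\cM_k\cap\cP^S_k$ is an $H$-invariant subspace of $\cM_k$, since $V=\bR^S_{0,m}$ is invariant under the two-sided Pin action and $\pa$ is $H$-invariant. Because Theorem~\ref{tmonog} expresses $\cM_k$ as a direct sum of pairwise inequivalent irreducible $H$-modules, any invariant subspace is automatically the direct sum of a subcollection of these irreducible pieces (this is the standard fact that in a multiplicity-free module the invariant subspaces are exactly the partial sums of the isotypic/irreducible components). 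So the whole task reduces to identifying which of the listed summands lie inside $\cP^S_k$.

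The second step is the bookkeeping of values. The summands $\cH^s_k$ are $s$-vector valued, hence $\cH^s_k\subset\cP^S_k$ exactly when $s\in S$; this accounts for the first block $\bigoplus_{s\in S}\cH^s_k$. For the second block, a polynomial in $((k-1+m-s)\,\xx\bullet-(k-1+s)\,\xx\wed)\cH^s_{k-1}$ takes values in $\bR^{s-1}_{0,m}\oplus\bR^{s+1}_{0,m}$, because $\xx\bullet$ lowers the vector degree by one and $\xx\wedge$ raises it by one, while $\cH^s_{k-1}$ is $s$-vector valued. Hence this piece is contained in $\cP^S_k$ precisely when both $s-1\in S$ and $s+1\in S$, i.e.\ when $s\in S'=\{s:s\pm1\in S\}$. (One must note here that $(k-1+m-s)$ and $(k-1+s)$ are not simultaneously zero for $1\le s\le m-1$ and $k\ge1$, so neither component degenerates; this is exactly the non-vanishing already used implicitly in Theorem~\ref{tmonog} and recorded in the remark after it.) Conversely, any summand not of one of these two types has a nonzero component in some $\bR^{s'}_{0,m}$ with $s'\notin S$, so it is \emph{not} contained in $\cP^S_k$; by the multiplicity-free property it therefore contributes nothing to $\cM^S_k$. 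Putting these together yields the displayed decomposition, and the ``in particular'' statement is just the compact rewriting $\cM^S_k=\cH^S_k\oplus X\cH^{S'}_{k-1}$ with $X=x\wedge A-x\bullet B$ as in (\ref{lmonog}), using $EP=(k-1)P$, $\pa^+\rfloor P=sP$, $\pa^-\rceil P=(m-s)P$ on $\cH^s_{k-1}$ so that $XP=((k-1+m-s)\,\xx\bullet-(k-1+s)\,\xx\wed)P$ up to the sign conventions already fixed in the remark.

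The inequivalence and irreducibility assertions are inherited for free: the pieces appearing in $\cM^S_k$ are a subfamily of the pieces in $\cM_k$, which are already known to be irreducible and mutually inequivalent under the $H$-action by Theorem~\ref{tmonog}.

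I expect the only genuinely delicate point to be the justification that an invariant subspace of a multiplicity-free completely reducible module is a sum of irreducible constituents, together with the care needed to check that the two ``mixed-degree'' operators $\xx\bullet$ and $\xx\wedge$ really do land in the claimed vector-degree components and that the linear combination in the second block never collapses to a single homogeneity degree for the relevant range of $s$ and $k$. Everything else is routine degree and grade counting.
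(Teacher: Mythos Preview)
Your proposal is correct and follows exactly the approach the paper indicates: the paper merely states that since $\cM^S_k$ is an invariant subspace of $\cM_k$, the result ``follows easily from Theorem~\ref{tmonog}'', and you have supplied precisely the routine details (multiplicity-free argument plus grade bookkeeping) that this sentence leaves implicit.
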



\section{Inframonogenic Polynomials}\label{sinfra}

In \cite{MPS1,MPS2}, inframonogenic functions have been recently introduced and studied. In particular, in \cite{MPS1}, an analogue of the Fischer decomposition for inframonogenic polynomials is given. Recall that an $\bR_{0,m}$-valued polynomial $P$ in $\bR^m$ is said to be inframonogenic if $$\pa P\pa=0.$$
Defining the modified Laplace operator $\tilde\Delta$ by
$$\tilde\Delta=-(\pa^+\pa^--\pa^-\pa^+),$$ it is well-known that a~polynomial $P$ is inframonogenic if and only if $\tilde\Delta P=0,$ see \cite{BDS}. Indeed, for $P\in\cP^s_k,$ we have that
$$\pa P\pa=(-1)^s\tilde\Delta P$$ and the operator $\tilde\Delta$ is scalar in the sense that it preserves the order of the multiplicative function on which it acts. Moreover, let us remark that,
for $P\in\cP^s_k,$ we have that
$$\xx P\xx=(-1)^s(\xx\bullet\xx\wedge-\;\xx\wedge\xx\bullet)P.$$ See \cite{BDS} for details.
In \cite{MPS1}, the next decomposition has been obtained (cf. (\ref{hFischer})):
\begin{equation}\label{iFischer}
\cP^*=\bigoplus_{s=0}^{m}\bigoplus_{k=0}^{\infty}\bigoplus_{p=0}^{\infty}\xx^p(\Ker^s_k\tilde\Delta)\xx^p
\end{equation}
where $\Ker_k^s\mD=\{P\in\cP_k^s: \mD P=0\}.$ In Theorem \ref{tdecompi} below, we present decompositions of homogeneous inframonogenic polynomials in terms of two-sided monogenic ones which are quite analogous to the decompositions of  homogeneous harmonic polynomials given in Theorem \ref{hom}. 
Moreover, decompositions of homogeneous polynomials which are harmonic and inframonogenic at the same time are given.

\begin{thm}\label{tdecompi} Let $0\leq s\leq m$ and $k\in\bN_0.$\\
(i) Under the $H$-action, $\Ker_k^s\mD$ decomposes into inequivalent irreducible pieces as
$$
\Ker_k^s\mD=\cH^s_k\oplus\xx\wedge \cH^{s-1}_{k-1}\oplus\xx\bullet \cH^{s+1}_{k-1}\oplus \tilde \cW^s_k
$$
where, for $1\leq s\leq m-1$ and $k\geq 2,$ we have that  $$\tilde \cW^s_k=((c_1+1)c_2\;\xx\wedge\xx\bullet\;+(c_2+1)c_1\;\xx\bullet\xx\wed)\cH^s_{k-2}$$
with $c_1=k-2+s$ and $c_2=k-2+m-s,$ and $\tilde\cW^s_k=\{0\}$ otherwise.\\
Here $\cH^{s'}_{k'}=\{0\}$ unless $0\leq s'\leq m$ and $k'\in\bN_0.$\\
(ii) Moreover, we have that
$$
\Ker^s_k\Delta\cap\Ker_k^s\mD=\cH^s_k\oplus\xx\wedge \cH^{s-1}_{k-1}\oplus\xx\bullet \cH^{s+1}_{k-1}.
$$
\end{thm}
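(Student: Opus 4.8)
The strategy is to reduce everything to the known harmonic decomposition of Theorem \ref{hom} by analyzing the relation between the two ``second order'' operators $\Delta=-(\pa^+\pa^-+\pa^-\pa^+)$ and $\mD=-(\pa^+\pa^--\pa^-\pa^+)$. On the space $\cH^s_k\oplus\xx\wedge\cH^{s-1}_{k-1}\oplus\xx\bullet\cH^{s+1}_{k-1}$, both operators vanish: the summands are built from two-sided monogenic pieces and a single application of $\xx\wed$ or $\xx\bul$, so $\pa^+$ and $\pa^-$ can be computed explicitly on them using $(\pa^+)^2=0$, $(\pa^-)^2=0$, and the commutation relations between $\pa^\pm$ and $\xx\wed,\xx\bul$ coming from the $\sl(2|1)$ structure. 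Hence this common subspace sits inside both $\Ker^s_k\Delta$ and $\Ker^s_k\mD$, and by Theorem \ref{hom} it exhausts $\Ker^s_k\Delta$ except for the one remaining irreducible copy $\cW^s_k\simeq\cH^s_k$ (when $1\le s\le m-1$, $k\ge 2$). So the entire content of the theorem is: on that last copy, decide when $\mD$ kills it.

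For part (i), the remaining work is a one-dimensional computation on the line spanned by the two vectors $\xx\wedge\xx\bullet Q$ and $\xx\bullet\xx\wed Q$ for $Q\in\cH^s_{k-2}$. I would compute the action of $\mD$ on a general combination $(a\,\xx\wedge\xx\bullet+b\,\xx\bullet\xx\wed)Q$. Using $\mD=-(\pa^+\pa^--\pa^-\pa^+)$, the relations $(\pa^\pm)^2=0$, $\pa^+Q=\pa^-Q=0$, and the basic bracket identities expressing $\pa^+(\xx\wed\,\cdot)$, $\pa^-(\xx\bul\,\cdot)$, etc., in terms of the Euler-type operators $E$, $\pa^+\rfloor$, $\pa^-\rceil$ (which act on $\cH^s_{k-2}$ as the scalars $k-2$, $s$, $m-s$), one lands on a $2\times 2$ linear system whose kernel is one-dimensional. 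Matching it to the prescribed coefficients $(c_1+1)c_2$ and $(c_2+1)c_1$ with $c_1=k-2+s$, $c_2=k-2+m-s$ is then just bookkeeping; one should double-check consistency with Theorem \ref{hom} by verifying that the $\Delta$-kernel direction $((k-2+m-s)\xx\wedge\xx\bullet-(k-2+s)\xx\bullet\xx\wed)$ and the $\mD$-kernel direction are genuinely distinct (which forces $\cW^s_k\cap\Ker^s_k\mD=\{0\}$), and this is where the degenerate cases $s\in\{0,m\}$ or $k\le 1$ must be handled separately, since then there is no $\cW$ at all and the three-term sum is everything.

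Part (ii) is then almost immediate: $\Ker^s_k\Delta\cap\Ker^s_k\mD$ contains $\cH^s_k\oplus\xx\wedge\cH^{s-1}_{k-1}\oplus\xx\bullet\cH^{s+1}_{k-1}$ by the opening remark, and conversely, intersecting the decomposition from Theorem \ref{hom} with that of part (i), the only possible extra contribution lies in $\cW^s_k\cap\tilde\cW^s_k$; since these are two distinct lines inside the same plane $\span\{\xx\wedge\xx\bullet Q,\ \xx\bullet\xx\wed Q\}$ (distinctness being exactly the inequality of the two coefficient ratios, checked in part (i)), the intersection is zero. Alternatively one notes $\Delta=-\tfrac12(\Delta+\mD)\cdot(\cdots)$—more cleanly, $\pa^+\pa^-=-\tfrac12(\Delta+\mD)$ and $\pa^-\pa^+=-\tfrac12(\Delta-\mD)$, so $\Delta P=\mD P=0$ is equivalent to $\pa^+\pa^-P=\pa^-\pa^+P=0$, and then a short argument via the Fischer inner product (adjointness of $\pa^\pm$ with $\xx\wed,\xx\bul$) shows this is equivalent to being in the sum of the first three summands.

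\textbf{Main obstacle.} The only real work is the explicit $2\times 2$ computation in part (i): getting the commutators of $\pa^+,\pa^-$ with the words $\xx\wedge\xx\bullet$ and $\xx\bullet\xx\wed$ right, including the correct scalar coefficients from $E,\pa^+\rfloor,\pa^-\rceil$ acting on $\cH^s_{k-2}$, and then verifying that the resulting kernel direction matches the stated $((c_1+1)c_2,\,(c_2+1)c_1)$ and is never proportional to the $\Delta$-kernel direction $((k-2+m-s),\,-(k-2+s))$ in the relevant range. Everything else is a matter of invoking Theorem \ref{hom} and tracking the degenerate low-degree and extreme-$s$ cases.
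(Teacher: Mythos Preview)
Your outline correctly identifies the algebraic core --- the $2\times 2$ computation of $\mD$ on $a\,\xx\wedge\xx\bullet Q+b\,\xx\bullet\xx\wedge Q$ for $Q\in\cH^s_{k-2}$ --- and your treatment of part~(ii) (either by comparing the two distinct lines $\cW^s_k$ and $\tilde\cW^s_k$ inside the plane, or via $\pa^+\pa^-P=\pa^-\pa^+P=0$ and Fischer adjointness) is fine and in fact more self-contained than the paper, which simply cites~\cite{lav} for (ii). A small slip: $\cW^s_k\simeq\cH^s_{k-2}$, not $\cH^s_k$.

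However, for part~(i) there is a genuine gap. Your computation shows the inclusion
\[
N^s_k:=\cH^s_k\oplus\xx\wedge\cH^{s-1}_{k-1}\oplus\xx\bullet\cH^{s+1}_{k-1}\oplus\tilde\cW^s_k\ \subset\ \Ker^s_k\mD,
\]
but nowhere do you argue the reverse inclusion. Theorem~\ref{hom} controls $\Ker^s_k\Delta$, not $\Ker^s_k\mD$, and $\Ker^s_k\mD$ is \emph{not} a subspace of $\Ker^s_k\Delta$, so ``reducing to Theorem~\ref{hom}'' does not bound $\Ker^s_k\mD$ from above. Concretely, in the decomposition of $\cP^s_k$ from Theorem~\ref{tFischerH} there are many further isotypic components --- the copies $w_1^p\cH^s_{k-2p}\oplus w_2^p\cH^s_{k-2p}$ for $p\ge 2$ and $w_1^p\,\xx\wedge\cH^{s-1}_{k-2p-1}$, $w_2^p\,\xx\bullet\cH^{s+1}_{k-2p-1}$ for $p\ge 1$ --- on which you have not checked that $\mD$ is injective. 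Your ``$2\times 2$ system'' treats only the $p=1$ level.

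The paper closes this gap differently: it invokes the inframonogenic Fischer decomposition \eqref{iFischer} from \cite{MPS1},
\[
\cP^s_k=\bigoplus_{p=0}^{[k/2]}\xx^p(\Ker^s_{k-2p}\mD)\xx^p,
\]
and then verifies, using Theorem~\ref{tFischerH}, that $\bigoplus_p \xx^p N^s_{k-2p}\,\xx^p$ already fills all of $\cP^s_k$; this forces $N^s_k=\Ker^s_k\mD$. If you want to stay closer to your own approach, an equivalent patch is a dimension count: show $\mD:\cP^s_k\to\cP^s_{k-2}$ is surjective (for instance because its Fischer adjoint $P\mapsto \xx P\xx$ is injective, as $\xx$ is invertible for $\xx\neq 0$), hence $\dim\Ker^s_k\mD=\dim\cP^s_k-\dim\cP^s_{k-2}=\dim\Ker^s_k\Delta=\dim N^s_k$. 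Either way, some such global input is needed; the local $2\times 2$ computation alone does not suffice.
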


\begin{proof}
(a) For a~proof of the statement (ii), see \cite[proof of Theorem 1]{lav}. 

\medskip\noindent
(b) For $1\leq s\leq m-1$ and $k\geq 2,$ we show now that
$$\tilde \cW^s_k=\Ker^s_k\tilde\Delta\cap(\xx\wedge\xx\bullet \cH^s_{k-2}\oplus \xx\bullet\xx\wedge \cH^s_{k-2}).$$
To do this, let
a~polynomial $P$ belong to the space $$\xx\wedge\xx\bullet \cH^s_{k-2}\oplus \xx\bullet\xx\wedge \cH^s_{k-2}.$$
Then there are uniquely determined polynomials $P_1,P_2\in \cH^s_{k-2}$ such that
$$P=\xx\wedge\xx\bullet P_1+\xx\bullet\xx\wedge P_2.$$
Obviously, it remains to show that $\tilde\Delta P=0$ if and only if the polynomial $P$ lies in $\tilde\cW^s_k.$
To show this we use the following well-known relations (see e.g. \cite{BDS}):

\begin{lem}\label{lrels}
If we put $\{T,S\}=TS+ST$ for linear operators $T$ and $S$ on the space $\cP^*,$ then we have that
\begin{equation*}
\begin{array}{lll}
\{\xx\wed,\xx\wed\}=0, &\{\xx\bul,\xx\bul\}=0, &\{\xx\wed,\xx\bul\}=-\sum_{j=1}^m x_j^2=-|\xx|^2,\medskip\\{}
\{\pa^+,\pa^+\}=0, &\{\pa^-,\pa^-\}=0, &\{\pa^+,\pa^-\}=-\Delta,\medskip\\{}
\{\xx\bul,\pa^+\}=-A, &\{\xx\wed,\pa^-\}=-B, &\{\xx\bul,\pa^-\}=0=\{\xx\wed,\pa^+\}.
\end{array}
\end{equation*}
\end{lem}

\noindent
Obviously, Lemma \ref{lrels} gives us that $\mD P=-2c_1(c_2+1)P_1+2(c_1+1)c_2P_2$ because
$$\mD\;\xx\wedge \xx\bullet P_1=-2c_1(c_2+1)P_1
\text{\ \ and \ \ }\mD\;\xx\bullet\xx\wedge P_2=2(c_1+1)c_2P_2$$
with $c_1=k-2+s$ and $c_2=k-2+m-s.$
Whence we conclude that $\mD P=0$ if nad only if
$$P_2=\frac{c_1(c_2+1)}{c_2(c_1+1)}P_1,$$
which finishes the proof.

\medskip\noindent
(c) We prove the statement (i). Assume that $k\geq 2$ and $1\leq s\leq m-1.$ Otherwise, we can argue in an analogous way.
Denoting $$N^s_k=\cH^s_k\oplus\xx\wedge \cH^{s-1}_{k-1}\oplus\xx\bullet \cH^{s+1}_{k-1}\oplus \tilde \cW^s_k,$$ it is easy to see, by (ii) and (b), that $N^s_k\subset\Ker^s_k\tilde\Delta.$
Moreover, by (\ref{iFischer}), we have that
$$\cP^s_k=\bigoplus^{[k/2]}_{p=0}\xx^p(\Ker^s_{k-2p}\tilde\Delta)\xx^p.$$
Hence to prove the opposite inclusion $N^s_k\supset\Ker^s_k\tilde\Delta$ it is sufficient to show that
\begin{equation}\label{psk}
\cP^s_k=\bigoplus^{[k/2]}_{p=0}\xx^p N^s_{k-2p}\; \xx^p.
\end{equation}
To do this notice that, putting $w_1=\xx\wedge\xx\bul$ and $w_2=\xx\bullet\xx\wed,$ we get
$$\xx^p(\xx\wedge \cH^{s-1}_{k-2p-1})\xx^p=w_1^p\;\xx\wedge\cH^{s-1}_{k-2p-1}\text{\ \ and\ \ }
\xx^p(\xx\bullet\cH^{s+1}_{k-2p-1})\xx^p=w_2^p\;\xx\bullet\cH^{s+1}_{k-2p-1}.$$
Moreover, since $\xx\;\cH^s_{k-2}\;\xx\oplus\tilde\cW^s_k=
w_1\cH^s_{k-2}\oplus w_2\cH^s_{k-2}$
we have that, for $p\geq 1,$
$$\xx^p\;\cH^s_{k-2p}\;\xx^p\oplus\xx^{p-1}\;\tilde\cW^s_{k-2p+2}\;\xx^{p-1}=
w_1^p\cH^s_{k-2p}\oplus w_2^p\cH^s_{k-2p}.$$
Finally, Theorem \ref{tFischerH} gives us that
$$
\cP^s_k=\cH^s_k\oplus\bigoplus^{[k/2]}_{p=1}(w_1^p\cH^s_{k-2p}\oplus w_2^p\cH^s_{k-2p})
\oplus\bigoplus^{[(k-1)/2]}_{p=0}(w_1^p\xx\wedge\cH^{s-1}_{k-2p-1}\oplus w_2^p\xx\bullet\cH^{s+1}_{k-2p-1}).
$$
Using these observations, we get easily the decomposition (\ref{psk}), which completes the proof of (i).
\end{proof}


\subsection*{Acknowledgment}

I am grateful to R. Delanghe and V. Sou\v cek for useful conversations.


\bigskip\bigskip\bigskip

\noindent
Roman L\'avi\v cka,\\ Mathematical Institute, Charles University,\\ Sokolovsk\'a 83, 186 75 Praha 8, Czech Republic\\
email: \texttt{lavicka@karlin.mff.cuni.cz}


\begin{thebibliography}{1}
%
\bibitem{BDS} F. Brackx, R. Delanghe and F. Sommen, \textit{Differential forms and/or multi-vector
functions}, CUBO \textbf{7} (2005), 139-170.
%
\bibitem{BDS2} F. Brackx, R. Delanghe and F. Sommen, \textit{Clifford analysis}, Pitman, London, 1982.

\bibitem{BSES} F. Brackx, H. De Schepper, D. Eelbode and V. Sou\v cek, \textit{The Howe dual pair in hermitian Clifford analysis}, preprint.
%
\bibitem{DLS} R. Delanghe, R. L\'avi\v cka and V. Sou\v cek, \textit{On polynomial solutions of generalized Moisil-Th\'eodoresco systems and Hodge systems}, arXiv:0908.0842 [math.CV], 2009.
%
\bibitem{DLS2} R. Delanghe, R. L\'avi\v cka and V. Sou\v cek, \textit{The Fischer decomposition for Hodge-de Rham systems in Euclidean spaces}, preprint.
%
\bibitem{DLS3} R. Delanghe, R. L\'avi\v cka and V. Sou\v cek, \textit{The Howe duality for Hodge systems}, In: Proceedings of 18th International Conference on the Application of Computer Science and Mathematics in Architecture and Civil Engineering  (ed. K. Gürlebeck and C. Könke),
Bauhaus-Universität Weimar, Weimar, 2009.
%
\bibitem{R1} R. Delanghe, \textit{On homogeneous polynomial solutions of the Riesz system and their harmonic potentials},
Complex Var. Elliptic Equ.  \textbf{52}  (2007),  no. 10-11, 1047--1061.
%
\bibitem{R2} R. Delanghe, \textit{On homogeneous polynomial solutions of generalized Moisil-Th\'eodoresco systems
in Euclidean space}, preprint.
%
%
\bibitem{DSS} R. Delanghe, F. Sommen, V. Sou\v cek, \textit{Clifford Algebra and Spinor-valued Functions}, Mathematics and Its Applications 53, Kluwer Academic Publishers, 1992, 485 pp. 
%
\bibitem{GM} J. E. Gilbert and M. A. M. Murray, \textit{Clifford Algebras and Dirac Operators in Harmonic Analysis},
Cambridge University Press, Cambridge, 1991.
%
%
%
\bibitem{hom} Y. Homma, \textit{Spinor-valued and Clifford algebra-valued harmonic polynomials}, J. Geom. Phys. \textbf{37} (2001), 201-215.
%
\bibitem{how} R. Howe, \textit{Remarks on classical invariant theory}, Trans. Am. Math. Soc. \textbf{313} (1989) (2), 539-570.
%
\bibitem{lav} R. L\'avi\v cka, \textit{On the Structure of Monogenic Multi-Vector Valued Polynomials}, 
In: ICNAAM 2009, Rethymno, Crete, Greece, 18-22 September 2009 (ed. T. E. simos, G. Psihoyios and Ch. Tsitouras),
AIP Conf. Proc. \textbf{1168} 793 (2009).
%
\bibitem{MPS1} H. R. Malonek, D. Pe\~na Pe\~na and F. Sommen, \textit{Fischer decomposition by inframonogenic functions},
arXiv:0911.0070 [math.CV], 2009.
%
\bibitem{MPS2} H. R. Malonek, D. Pe\~na Pe\~na and F. Sommen, \textit{A Cauchy-Kowalevski theorem for
inframonogenic functions}, arXiv:0911.00716 [math.CV], 2009.
%
\end{thebibliography}
\end{document}